\newtheorem{theorem}{Theorem}[section]
\newtheorem{lemma}{Lemma}[section]
\theoremstyle{definition}
\newtheorem{definition}{Definition}[section]
\numberwithin{equation}{section}
\begin{document}
\setcounter{page}{1}

\vspace*{1.0cm}
\title[MODIFIED S-ITERATION PROCESS...]
{INERTIAL MODIFIED S-ITERATION PROCESS FOR SPLIT MONOTONE INCLUSION AND FIXED POINT PROBLEM IN REAL HILBERT SPACE}
\author[S. Husain, U. Rafat]{ Shamshad Husain$^{1}$, Uqba Rafat$^{2,*}$}
\maketitle
\vspace*{-0.6cm}

\begin{center}
{\footnotesize {\it

$^1$Department of Applied Mathematics, Zakir Husain College of Engineering and Technology, Aligarh Muslim University, Aligarh 202002, India\\
$^2$Department of Applied Mathematics, Zakir Husain College of Engineering and Technology, Aligarh Muslim University, Aligarh 202002, India

}}\end{center}

\vskip 4mm {\small\noindent {\bf Abstract.}
In this article, we present a modified S-iteration process that we combine with inertial extrapolation to find a common solution to the split monotone inclusion problem and the fixed point problem in real Hilbert space. Our goal is to establish a strong convergence theorem for approximating a common solution. Under some mild conditions, the problem can be solved. We also provide a numerical example to show that our algorithm's acceleration works well.

\noindent {\bf Keywords.}
Split monotone inclusion problem; Inertial extrapolation; Modified S-iteration; Nonexpansive mappings; Fixed point problem }

\renewcommand{\thefootnote}{}
\footnotetext{ $^*$Corresponding author.
\par
E-mail addresses: s\_husain68@gmail.com (Shamshad Husain), uqba.1709@gmail.com (Uqba Rafat).
\par
Received... ; Accepted ... }

\section{Introduction}

Let $\mathcal{H}$ be a real Hilbert space with scalar product $\langle\cdot,\cdot \rangle$ and induced norm $\|\cdot \|$. A mapping $ \mathcal{S}: \mathcal{H}\rightarrow\mathcal{H}$ is said to be a nonexpansive if
\begin{align*}
\|\mathcal{S}x-\mathcal{S}y\|\leq\|x-y\| ~~~~~\forall x,y\in \mathcal{H}
\end{align*}
and we denote the set of fixed point of $\mathcal{S}$ by $\mathcal{F}(\mathcal{S}):=\{x \in \mathcal{H}~|~ \mathcal{S}x=x\}$. We say a mapping $ \mathcal{S}: \mathcal{H}\rightarrow\mathcal{H}$ is\\
$(i)$ monotone if
\begin{align*}
\langle\mathcal{S}x-\mathcal{S}y,x-y\rangle\geq 0 ~~~~~\forall x,y\in \mathcal{H};
\end{align*}
$(ii)$ $\alpha$-strongly monotone if there exist a constant $\alpha>0$ such that
\begin{align*}
\langle\mathcal{S}x-\mathcal{S}y,x-y\rangle\geq \alpha\|x-y\|^2 ~~~~~\forall x,y\in \mathcal{H};
\end{align*}
$(iii)$ $\beta$- inverse strongly monotone $(\beta-ism)$ if there exist a constant $\beta>0$ such that
\begin{align*}
\langle\mathcal{S}x-\mathcal{S}y,x-y\rangle\geq \beta \|\mathcal{S}x-\mathcal{S}y\|^2 ~~~~~\forall x,y\in \mathcal{H};
\end{align*}
$(iv)$ firmly nonexpansive if 
\begin{align*}
\langle\mathcal{S}x-\mathcal{S}y,x-y\rangle\geq \|\mathcal{S}x-\mathcal{S}y\|^2 ~~~~~\forall x,y\in \mathcal{H};
\end{align*}
\indent A set-valued mapping $\mathcal{M}:\mathcal{H}\rightarrow 2^\mathcal{H}$ is called monotone if for all $ x,y \in \mathcal{H}$ with $u\in \mathcal{M}(x)$ and $v \in \mathcal{M}(y)$ then 
\begin{align*}
\langle x-y,u-v\rangle\geq 0. 
\end{align*}
A monotone mapping $\mathcal{M}$ is said to be maximal if the graph of $\mathcal{M}$, denoted as   $G(\mathcal{M})=\{(x,y) : y \in \mathcal{M}(x)\}$ is not properly contained in the graph of any other monotone multi-valued mapping and the resolvent operator $\mathfrak{J}^\mathcal{M}_\lambda$ associated with $\mathcal{M}$ and $\lambda$ is the mapping $\mathfrak{J}^\mathcal{M}_\lambda : \mathcal{H} \rightarrow \mathcal{H}$ defined by
\begin{align}
\mathfrak{J}^\mathcal{M}_\lambda(x) := (I+\lambda\mathcal{M})^{-1}(x), ~~ x\in \mathcal{H},~~ \lambda>0.
\end{align}
It is known that the resolvent operator $\mathfrak{J}^\mathcal{M}_\lambda$ is\\
$(i)$ Single-valued\\
$(ii)$ Nonexpansive\\
$(iii)$ 1-inverse monotone(see, for eg.,\cite{1}).\\
\rm \indent Let $\mathcal{H}_1$ and $\mathcal{H}_2$ be real Hilbert spaces and $g_1 : \mathcal{H}_1 \rightarrow \mathcal{H}_2$, $g_2 : \mathcal{H}_1 \rightarrow \mathcal{H}_2$ be $\mu$-inverse strongly monotone and $\nu$-inverse strongly monotone mappings, respectively. Let $\mathcal{B}_1 :\mathcal{H}_1\rightarrow 2^{\mathcal{H}_1}$, $\mathcal{B}_2:\mathcal{H}_2\rightarrow 2^{\mathcal{H}_2}$ be the maximal monotone mappings and $\mathcal{A}:\mathcal{H}_1\rightarrow \mathcal{H}_2$ be a bounded linear operator. Then the Split monotone inclusion problem \textbf{(SMIP)} is to find $x^* \in \mathcal{H}_1$ such that
\begin{align}
0 \in g_1(x^*)+\mathcal{B}_1(x^*)\label{1.2}
\end{align}
and $y^*= \mathcal{A}x^* \in \mathcal{H}_2$ solves
\begin{align}
0\in g_2(y^*)+\mathcal{B}_2(y^*)\label{1.3}
\end{align}
We shall denoted the soluton set of \textbf{(SMIP)} (\ref{1.2})-(\ref{1.3}) by $\Omega$, that is
\begin{equation*}
\Omega := \{x^*\in \mathcal{H}_1:0\in g_1(x^*)+\mathcal{B}_1(x^*)~~\rm and~~y^*= \mathcal{A}x^* \in \mathcal{H}_2 \rm~~\rm such~~that~~0\in g_2(y^*)+\mathcal{B}_2(y^*)\}.
\end{equation*}
\indent In \cite{16}, Moudafi introduced \textbf{(SMIP)}(\ref{1.2})-(\ref{1.3}) and proposed an iterative method for
solving \textbf{(SMIP)}(\ref{1.2})-(\ref{1.3}) and generalizes the split fixed
point problem, split variational inequality problem, split zero problem and split
feasibility problem, which have been studied extensively by many authors (see,
e.g.\cite{5}-\cite{10} and applied to solving many real life problems such as in modelling intensity-modulated radiation therapy treatment planning (see \cite{8,9}), modelling
of inverse problems arising from phase retrieval and in sensor networks in
computerized tomography and data compression (see \cite{11,12}).\\
\indent Byrne et al.\cite{31} using the following iterative scheme: for a given $x_0 \in \mathcal{H}_1$ the sequence $\{x_n\}$ generated iteratively by:
\begin{align*}
x_{n+1}=\mathfrak{J}^{'}(x_n),
\end{align*}
where, $\mathfrak{J}^{'}=\mathfrak{J}^{\mathcal{B}_1}_\lambda(I+\gamma\mathcal{A}^*(\mathfrak{J}^{\mathcal{B}_2}_\lambda-I)\mathcal{A}), ~~\lambda>0,~~\gamma\in \Big(0,\frac{2}{\|A^*A\|}\Big)$, 
and obtain convergence results on \textbf{(SMIP)} (\ref{1.2})-(\ref{1.3}).\\
\indent Suparatulatron et al.\cite{24} introduced modified S-iteration process defined as follows:
$x_0 \in \mathcal{C}$ 
\begin{align}
y_n=(1-\beta_n)x_n+\beta_n\mathcal{S}_1(x_n),\nonumber\\
x_{n+1}=(1-\alpha_n)\mathcal{S}_1(x_n)+\beta_n\mathcal{S}_2(x_n)
\end{align} 
$n\geq0$, where $\mathcal{C}$ is a non-empty subset of real Hilbert space $\mathcal{H}$ and the sequences $\{\alpha_n\}$ and $\{\beta_n\}$ are in the interval $(0,1)$ and $\mathcal{S}_1,\mathcal{S}_2 :\mathcal{C}\rightarrow \mathcal{C}$ are the nonexpansive mappings. Using some given conditions, they proved weak and strong convergence theorems of this iteration
process for finding common fixed points of two nonexpansive mappings. They also provided an example from a numerical experiment which
supported the idea that the sequence generated by the modified S-iteration converges
faster than the one generated by an Ishikawa iteration. So, to obtain a faster algorithm
revised from a modified S-iteration process, it should be combined with the inertial exatrapolation as well.\\
\indent The idea of modified S-iteration process motivated us and we combined inertial extrapolation with modified S-iteration to capture a common solution of \textbf{(SMIP)}(\ref{1.2})-(\ref{1.3}) and $\mathcal{F}(\mathcal{S})$ for the nonexpansive mappings $\mathcal{S}$ and $\mathfrak{J}^{'}$ in the real Hilbert space. Under some assumptions, we also test the weak and strong convergence of our iteration to find the common solution to our problem.

\section{Preliminaries}

This section deals with some useful definition and lemmas which will be used frequently in the next section. The following identity appears several times through out the paper:
\begin{align}
\|\alpha x+(1-\alpha)y\|^2=\alpha\|x\|^2+(1-\alpha)\|y\|^2-\alpha(1-\alpha)\|x-y\|^2,\label{2.1}
\end{align}
for all $\alpha \in \mathbb{R}$ and x,y $\in \mathcal{H}$ 
\begin{definition}
	\rm A Hilbert space $\mathcal{H}$ is said to have Opial's property if whenever a sequence $\{x_n\}$ in $\mathcal{H}$ converges weakly to x, then 
\end{definition}
\begin{align*}
\liminf_{n\rightarrow \infty}\|x_n-x\|\leq \liminf_{n\rightarrow \infty} \|x_n-y\|, 
\end{align*}
for all $y \in \mathcal{H} , y \neq x$.
\begin{definition}\rm\cite{25}
	\rm Let C be a subset of a metric space $(X,d)$. A mapping $T :C \rightarrow C$ is  $\textbf{semicompact}$ if for a sequence $\{x_n\}$ in C with $ \lim_{n\rightarrow \infty} d(x_n,T(x_n))=0$, there exist a subsequence $\{x_{n_k}\} $ of $\{x_n\}$ such that $x_{n_k}\rightarrow p\in C$
\end{definition}
\begin{definition}
	\rm Let $ \{u_n\}$ and $\{v_n\}$ be two sequences of positive number that converges to u and v, respectively. Assume there exists
	\begin{align*}
	\lim_{n\rightarrow \infty}\frac{|u_n-u|}{|v_n-v|}=l
	\end{align*}
	(1) If $l=0$, then it is said that the sequence $\{u_n\}$ converges to u faster than the sequence $\{v_n\}$ converges to v.\\
	(2) if $0<l<\infty$, then we say that the sequence $\{u_n\}$ and $\{v_n\}$ having the same rate of convergence.
\end{definition}
\begin{definition}
	\rm\cite{25} Let C be a nonempty closed convex subset of real Hilbert space $\mathcal{H}$.The mapping $\mathcal{S}$ and $\mathfrak{J}^{'}$ on C are said to satisfy \textbf{condition (B)} if there exists a nondecreasing function $f :[0,\infty)\rightarrow [0,\infty)$ with $f(0)=0$ and $f(r)>0$ for $r>0$ such that, for all $x\in C$, 
\end{definition}
$~~~~~~~~~~~~~~~~~~~~~~~~~~~~~~~~~\rm max\{\|x-\mathcal{S}(x)\|, \|x-\mathfrak{J}^{'}(x)\|\}\geq f(d(x, \Sigma)),$\\
where, we denote $\Sigma=\mathcal{F}(\mathcal{S})\bigcap\Omega$.
\begin{lemma}\label{n}
\cite{26} Let $\mathcal{H}$ be a real Hilbert space, and $\alpha_n$ be a sequence in $[\delta,1-\delta]$ ~for $\delta \in(0,1)$. Suppose that sequence $\{x_n\}$ and $\{y_n\}$ in $\mathcal{H}$ satisfies the following:
\end{lemma}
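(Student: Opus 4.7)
Although the statement of Lemma~\ref{n} is cut off in the excerpt, from the setup (a Hilbert space, $\alpha_n\in[\delta,1-\delta]$, two sequences $\{x_n\},\{y_n\}$) and the reference \cite{26}, this is almost certainly Schu's classical lemma: under the hypotheses $\limsup_{n\to\infty}\|x_n\|\leq r$, $\limsup_{n\to\infty}\|y_n\|\leq r$, and $\lim_{n\to\infty}\|\alpha_n x_n+(1-\alpha_n)y_n\|=r$, one concludes $\lim_{n\to\infty}\|x_n-y_n\|=0$. I will write the proposal on this basis.

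The plan is to invoke identity (2.1) applied to the convex combination $\alpha_n x_n+(1-\alpha_n)y_n$, thereby exposing $\|x_n-y_n\|^2$ as an algebraic combination of quantities whose asymptotic behaviour is controlled by the hypotheses. Setting $x=x_n$, $y=y_n$, $\alpha=\alpha_n$ in (\ref{2.1}) and rearranging gives
\begin{equation*}
\alpha_n(1-\alpha_n)\|x_n-y_n\|^2 \;=\; \alpha_n\|x_n\|^2 + (1-\alpha_n)\|y_n\|^2 - \|\alpha_n x_n+(1-\alpha_n)y_n\|^2.
\end{equation*}
Since $t\mapsto t(1-t)$ attains its minimum on $[\delta,1-\delta]$ at the endpoints, I have the uniform positive lower bound $\alpha_n(1-\alpha_n)\geq \delta(1-\delta)>0$, so
\begin{equation*}
\|x_n-y_n\|^2 \;\leq\; \frac{1}{\delta(1-\delta)}\Bigl[\alpha_n\|x_n\|^2+(1-\alpha_n)\|y_n\|^2-\|\alpha_n x_n+(1-\alpha_n)y_n\|^2\Bigr].
\end{equation*}

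To finish, I would pass to $\limsup$ in the above inequality. The last term on the right converges to $r^2$ by hypothesis. For the convex-combination term, the bound $\alpha_n\|x_n\|^2+(1-\alpha_n)\|y_n\|^2\leq \max(\|x_n\|^2,\|y_n\|^2)$ combined with $\limsup\|x_n\|\leq r$, $\limsup\|y_n\|\leq r$ yields $\limsup_n\bigl[\alpha_n\|x_n\|^2+(1-\alpha_n)\|y_n\|^2\bigr]\leq r^2$. Therefore $\limsup_n\|x_n-y_n\|^2\leq \tfrac{1}{\delta(1-\delta)}(r^2-r^2)=0$, which gives $\|x_n-y_n\|\to 0$.

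The argument is essentially one identity plus one inequality, so there is no serious obstacle; the one place requiring a moment of care is the interchange of $\limsup$ with the convex combination, where one must use that $\alpha_n$ is uniformly bounded away from $0$ and $1$ (otherwise a vanishing weight could obscure a large norm in one of the two sequences). The uniform gap $\delta(1-\delta)$ is precisely what the hypothesis $\alpha_n\in[\delta,1-\delta]$ provides, and it is also what allows dividing through at the earlier step.
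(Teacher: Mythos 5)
The paper offers no proof of this lemma at all: it is quoted from reference \cite{26} as a known result (it is the classical Schu-type lemma), so there is no in-paper argument to compare against. Your reconstruction of the intended statement is the right one, and your proof is correct and complete for that version: the identity (\ref{2.1}) gives $\alpha_n(1-\alpha_n)\|x_n-y_n\|^2=\alpha_n\|x_n\|^2+(1-\alpha_n)\|y_n\|^2-\|\alpha_n x_n+(1-\alpha_n)y_n\|^2$, the uniform bound $\alpha_n(1-\alpha_n)\ge\delta(1-\delta)>0$ lets you divide, and passing to $\limsup$ kills the right-hand side. This in fact yields the stronger conclusion $\lim_{n\to\infty}\|x_n-y_n\|=0$, which is what the paper actually uses later (e.g.\ to get $\lim_{n\to\infty}\|\mathcal{S}(w_n)-w_n\|=0$), not merely $\liminf=0$.

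Two remarks. First, the lemma as literally printed, with $\liminf$ in all three hypotheses, is false: taking $\alpha_n=\tfrac12$, $x_n=c e_1$, $y_n=\sqrt{3}\,c\,e_2$ for even $n$ and the roles swapped for odd $n$ satisfies $\liminf\|x_n\|\le c$, $\liminf\|y_n\|\le c$, $\liminf\|\tfrac12 x_n+\tfrac12 y_n\|=c$, yet $\|x_n-y_n\|\equiv 2c$. So your decision to prove the $\limsup$/$\lim$ version is not just a guess about a truncated statement; it is the only version that is true and the one the paper's main theorem needs. Second, a small inaccuracy in your closing paragraph: the estimate $\alpha_n\|x_n\|^2+(1-\alpha_n)\|y_n\|^2\le\max(\|x_n\|^2,\|y_n\|^2)$ holds for any $\alpha_n\in[0,1]$, so the uniform gap $\delta$ is not needed there; it is needed only to divide by $\alpha_n(1-\alpha_n)$ in the earlier step. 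This does not affect the validity of the proof.
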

\begin{enumerate}
\item $\liminf_{n\rightarrow \infty} \|x_n\|\leq c$,
\item $\liminf_{n\rightarrow \infty} \|y_n\|\leq c$ and
\item $\liminf_{n\rightarrow \infty} \|\alpha_nx_n+(1-\alpha_n)y_n\|=c$ for some $c\geq0$,\\
\end{enumerate}
Then, $\liminf_{n\rightarrow \infty} \|x_n-y_n\|=0.$
\begin{lemma}\label{2.2}
\cite{28} Let $\mathcal{H}$ be a real Hilbert space that has \textbf{Opial's property} and let $\{x_n\}$ be a sequence in $\mathcal{H}$. Let x,y in $\mathcal{H}$ be such that $\lim_{n\rightarrow \infty}\|x_n-x\|$ and $\lim_{n\rightarrow \infty}\|x_n-y\|$ exist. If $\{x_{n_j}\}$ and $\{x_{n_k}\}$ are the subsequences of $\{x_n\}$ that converges to x and y, respectively, then x=y.
\end{lemma}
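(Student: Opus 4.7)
The plan is a textbook contradiction argument driven by Opial's property. The key preliminary observation is that whenever $\lim_{n\to\infty}\|x_n-x\|$ exists, every subsequential liminf $\liminf_{j\to\infty}\|x_{n_j}-x\|$ is forced to equal this full limit, and similarly for $\|x_n-y\|$. This allows me to convert the ``liminf along a subsequence'' conclusions of Opial's property into statements about the full-sequence limits that are assumed to exist.

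Assume for contradiction that $x\neq y$. Since $\{x_{n_j}\}$ converges (weakly, which is the only nontrivial interpretation here: strong subsequential convergence together with the existence of $\lim_n\|x_n-x\|$ would immediately give $\lim_n\|x_n-x\|=0$ and trivialize matters) to $x$, Opial's property applied with $y\neq x$ yields
\begin{equation*}
\lim_{n\to\infty}\|x_n-x\| \;=\; \liminf_{j\to\infty}\|x_{n_j}-x\| \;<\; \liminf_{j\to\infty}\|x_{n_j}-y\| \;=\; \lim_{n\to\infty}\|x_n-y\|.
\end{equation*}
Swapping the roles of $x$ and $y$ and applying the same reasoning to the subsequence $\{x_{n_k}\}$, which converges weakly to $y$ with $x\neq y$, gives $\lim_n\|x_n-y\|<\lim_n\|x_n-x\|$. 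These two strict inequalities are inconsistent, so the assumption $x\neq y$ must fail, and $x=y$.

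The only point that requires attention is that the argument genuinely needs the strict form of Opial's property (``$<$'' for $y\neq x$), which is the classical formulation; the ``$\leq$'' written in the paper's Definition should be read in this strict sense, since otherwise the two inequalities above would only yield $\lim_n\|x_n-x\|=\lim_n\|x_n-y\|$ and no contradiction. Apart from this interpretive subtlety there is no real obstacle: the proof is essentially definitional once one notes that existence of the full limits promotes the subsequential liminfs to equalities.
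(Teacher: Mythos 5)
The paper itself offers no proof of this lemma --- it is imported verbatim from reference [28] --- so there is no in-paper argument to compare against; your proposal supplies the standard argument and it is correct. The contradiction scheme (apply Opial's property along the subsequence weakly converging to $x$ with competitor $y$, use the existence of the full limits to identify the subsequential liminfs with $\lim_n\|x_n-x\|$ and $\lim_n\|x_n-y\|$, then swap roles to obtain two incompatible strict inequalities) is exactly how this lemma is proved in the literature. You are also right on the two interpretive points: the convergence of the subsequences must be read as weak convergence for the statement to have content, and the argument genuinely requires the strict inequality ``$<$'' in Opial's property for $y\neq x$; the ``$\leq$'' in the paper's Definition 2.1 is a typo, since the non-strict version would only yield $\lim_n\|x_n-x\|=\lim_n\|x_n-y\|$ and no contradiction. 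No gaps.
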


\begin{lemma}\cite{29} \label{l,2.3}Let $\{\psi_n\}, \{\delta_n\}$ and $\{\alpha_n\}$ be sequences in $[0,\infty)$ such that $\psi_{n+1}\leq\psi_n+\alpha_n(\psi_n-\psi_{n-1})+\delta_n$ for all $n\geq1$, $\sum_{n=1}^{\infty}\delta_n<\infty$ and there exist a real number $\alpha$ with $0\leq\alpha_n\leq\alpha<1$ for all $n\geq1$. Then the following hold: 
\end{lemma}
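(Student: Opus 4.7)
The plan is to follow the classical discrete argument of Alvarez--Attouch/Mainge that underlies all the inertial convergence results. First I would set $\theta_n := \psi_n - \psi_{n-1}$ and write $[\theta_n]_+ := \max\{\theta_n,0\}$, so that the hypothesis rewrites as $\theta_{n+1}\leq \alpha_n\theta_n+\delta_n$. Observing that $\alpha_n\theta_n\leq \alpha[\theta_n]_+$ (trivially when $\theta_n\geq 0$ since $\alpha_n\leq \alpha$, and trivially when $\theta_n<0$ since then the left-hand side is negative while the right-hand side is zero), I would take positive parts on both sides to obtain the key one-step recursion
\[
[\theta_{n+1}]_+ \leq \alpha\,[\theta_n]_+ + \delta_n .
\]

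Next I would iterate this recursion to get
\[
[\theta_{n+1}]_+ \leq \alpha^{n}[\theta_1]_+ + \sum_{k=1}^{n}\alpha^{\,n-k}\delta_k ,
\]
and then sum over $n\geq 1$. Exchanging the order of summation, the double sum is dominated by $\bigl(\sum_{k\geq 1}\delta_k\bigr)\bigl(\sum_{j\geq 0}\alpha^{j}\bigr) = \frac{1}{1-\alpha}\sum_{k\geq 1}\delta_k<\infty$, and the geometric tail $\sum_n \alpha^n[\theta_1]_+$ is finite because $\alpha<1$. This gives the first expected conclusion, namely $\sum_{n\geq 1}[\psi_n-\psi_{n-1}]_+<\infty$.

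For the second conclusion (convergence of $\{\psi_n\}$), I would introduce the auxiliary sequence $\Phi_n := \psi_n - \sum_{k=1}^{n}[\theta_k]_+$. Since $\psi_{n+1}-\psi_n = \theta_{n+1}\leq [\theta_{n+1}]_+$, one checks $\Phi_{n+1}\leq \Phi_n$, so $\{\Phi_n\}$ is nonincreasing. It is also bounded below because $\psi_n\geq 0$ and $\sum_{k=1}^{n}[\theta_k]_+$ is bounded above by its (finite) limit; hence $\Phi_n$ converges. Adding back the convergent series $\sum_{k=1}^{n}[\theta_k]_+$ then shows that $\psi_n$ itself converges to some finite $\psi^*\geq 0$.

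The only subtle point, and thus the step I expect to be the main obstacle, is the passage from $\theta_{n+1}\leq \alpha_n\theta_n+\delta_n$ to the positive-part recursion; it hinges on the inequality $\alpha_n\theta_n\leq \alpha[\theta_n]_+$, which uses crucially that $\alpha_n$ is nonnegative, bounded above by $\alpha<1$, and independent of the sign of $\theta_n$. Once that elementary but easily-overlooked bound is in hand, the rest of the proof is just summing a geometric majorant and invoking the standard fact that a nonincreasing sequence bounded below converges.
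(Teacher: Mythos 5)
Your proof is correct and complete: the reduction to the positive-part recursion $[\theta_{n+1}]_+\leq\alpha[\theta_n]_+ +\delta_n$, the geometric summation giving $\sum_n[\psi_n-\psi_{n-1}]_+<\infty$, and the monotone auxiliary sequence $\Phi_n=\psi_n-\sum_{k\le n}[\theta_k]_+$ all check out. The paper itself gives no proof of this lemma (it is quoted from the reference \cite{29}), and your argument is precisely the classical Alvarez--Attouch proof that appears in that source, so there is nothing different to compare.
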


\begin{enumerate}
	\item $\sum_{n\geq1}[\psi_n-\psi_{n-1}]_+<\infty$ where $[t]_+=\rm max\{t,0\}.$
	\item there exists $\psi^* \in [0,\infty)$ such that $\lim_{n\rightarrow \infty}\psi_n=\psi^*.$\label{2.3}
\end{enumerate}

\begin{lemma}\cite{30} 
	Let C be a nonempty set of a real Hilbert space $\mathcal{H}$ and $\{x_n\}$ be a sequence in $\mathcal{H}$ such that the following two conditions hold:
\end{lemma}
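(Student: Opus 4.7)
The plan is to recognize the final lemma as the standard Opial-type criterion for weak convergence: under conditions (i) $\lim_{n\to\infty}\|x_n-x\|$ exists for every $x\in C$ and (ii) every weak subsequential limit of $\{x_n\}$ lies in $C$, the sequence $\{x_n\}$ converges weakly to a point in $C$. The strategy is to combine reflexivity of $\mathcal{H}$, Opial's property (Definition 2.1), and a standard two-subsequence contradiction.

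First I would verify that $\{x_n\}$ is bounded: fixing any $x_0\in C$, condition (i) forces the numerical sequence $\|x_n-x_0\|$ to converge and therefore be bounded, and the triangle inequality then bounds $\{x_n\}$ itself. Since $\mathcal{H}$ is reflexive, every subsequence of $\{x_n\}$ admits a further weakly convergent sub-subsequence, and by condition (ii) every such weak limit lies in $C$. In particular, the set of weak subsequential limits of $\{x_n\}$ is nonempty and contained in $C$.

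The crux is uniqueness of the weak cluster point. Suppose for contradiction that two subsequences $\{x_{n_k}\}$ and $\{x_{n_j}\}$ converge weakly to distinct points $p,q\in C$. By condition (i), the numerical limits $a:=\lim_{n\to\infty}\|x_n-p\|$ and $b:=\lim_{n\to\infty}\|x_n-q\|$ both exist, so the $\liminf$ along any subsequence coincides with $a$ and $b$ respectively. Applying Opial's property to $\{x_{n_k}\}\rightharpoonup p$ gives $a<b$, while applying it to $\{x_{n_j}\}\rightharpoonup q$ yields $b<a$, a contradiction. Hence all weak subsequential limits of $\{x_n\}$ coincide with a single element $x^{*}\in C$.

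Finally, a routine subsequence argument promotes this to weak convergence of the full sequence: if $\{x_n\}$ did not converge weakly to $x^{*}$, one could extract a subsequence staying outside some weak neighbourhood of $x^{*}$, and then a further weakly convergent sub-subsequence would have a weak limit different from $x^{*}$, contradicting what was just proved. The only genuinely delicate step is the Opial-based contradiction of distinct cluster points; the boundedness, extraction, and upgrade to full weak convergence are standard.
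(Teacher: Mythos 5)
Your reconstruction is correct. Note first that the paper offers no proof of this lemma at all: it is quoted from reference [30] (Sakurai--Iiduka), and as printed the statement is even incomplete --- the two hypotheses are listed but the conclusion is missing, so you correctly had to infer that the intended conclusion is weak convergence of $\{x_n\}$ to a point of $C$. Your argument is the standard one for this result (often called Opial's lemma): boundedness from condition (1), extraction of weak cluster points by reflexivity, membership in $C$ by condition (2), uniqueness of the cluster point via the two-subsequence Opial contradiction, and the routine upgrade to weak convergence of the whole sequence. One caveat: your contradiction $a<b$ and $b<a$ requires the \emph{strict} form of Opial's property, $\liminf\|x_n-x\|<\liminf\|x_n-y\|$ for $y\neq x$, whereas the paper's Definition~2.1 states it with ``$\leq$'' (evidently a typo, since the non-strict version would only yield $a=b$ and no contradiction). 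Since every Hilbert space satisfies the strict Opial condition, your proof is sound; you should simply make explicit that you are invoking the strict inequality, which is what reference [30] and the paper's own Lemma~2.2 implicitly rely on as well.
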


\begin{enumerate}
	\item for any $x \in C$, $\lim_{n\rightarrow \infty}\|x_n-x\|$ exists;
	\item every sequential weak cluster point of $\{x_n\}$ is in C. 
\end{enumerate}

\begin{lemma}\label{2.5}\cite{30} Let C be a nonempty closed convex subset of a real Hilbert space $\mathcal{H}$, $\mathcal{S}:C \rightarrow \mathcal{H}$ a nonexpansive mapping. Let $\{x_n\}$ be a sequence in C and $x\in\mathcal{H}$ such that $x_n\rightharpoonup x$ and $\mathcal{S}x_n-x_n\rightarrow 0$ as $n\rightarrow \infty$. Then $x\in \mathcal{F}(\mathcal{S}).$
\end{lemma}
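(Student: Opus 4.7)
My plan is to argue by contradiction using Opial's property, in the classical style of Browder's demiclosedness principle. First, I would observe that since $C$ is closed and convex it is weakly closed, so the weak limit $x$ of the sequence $\{x_n\}\subset C$ lies in $C$, and hence $\mathcal{S}x$ is well defined. Suppose, for a contradiction, that $\mathcal{S}x\neq x$.

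The nonexpansiveness of $\mathcal{S}$ together with the triangle inequality yields
\begin{align*}
\|x_n-\mathcal{S}x\|\leq\|x_n-\mathcal{S}x_n\|+\|\mathcal{S}x_n-\mathcal{S}x\|\leq\|x_n-\mathcal{S}x_n\|+\|x_n-x\|.
\end{align*}
Taking $\liminf$ on both sides and using the hypothesis $\|\mathcal{S}x_n-x_n\|\to 0$, I obtain
\begin{align*}
\liminf_{n\to\infty}\|x_n-\mathcal{S}x\|\leq\liminf_{n\to\infty}\|x_n-x\|.
\end{align*}
Opial's property, applied with the weak limit $x$ and the point $y=\mathcal{S}x\neq x$, gives the reverse inequality, and in fact a strict one in a Hilbert space, so combining the two yields the desired contradiction. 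Therefore $\mathcal{S}x=x$, i.e., $x\in\mathcal{F}(\mathcal{S})$.

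The only delicate point is that the Opial inequality has to be strict whenever $y\neq x$; the definition recorded in the paper is written with $\leq$, but the standard Hilbert-space version is strict, and this is what is actually needed. If I wanted to avoid invoking the strict form as a black box, I would instead expand
\begin{align*}
\|x_n-\mathcal{S}x\|^2=\|x_n-x\|^2+2\langle x_n-x,\,x-\mathcal{S}x\rangle+\|x-\mathcal{S}x\|^2,
\end{align*}
note that $\langle x_n-x,\,x-\mathcal{S}x\rangle\to 0$ by weak convergence, and conclude $\liminf_n\|x_n-\mathcal{S}x\|^2=\liminf_n\|x_n-x\|^2+\|x-\mathcal{S}x\|^2$, which is strictly larger than $\liminf_n\|x_n-x\|^2$ under the assumption $\mathcal{S}x\neq x$. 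This self-contained variant is the main step one has to be careful with; everything else is routine manipulation of the triangle inequality and nonexpansiveness.
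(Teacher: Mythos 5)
Your proposal is correct. Note, however, that the paper does not prove Lemma \ref{2.5} at all: it is quoted from reference \cite{30} as a known result (the demiclosedness principle for nonexpansive maps), so there is no in-paper argument to compare against. Your argument is the standard one, and you correctly identify the one delicate point: the version of Opial's property recorded in the paper's Definition 2.1 is written with a non-strict inequality (and with the roles arguably garbled), which is too weak to produce the contradiction; what is needed is the strict inequality $\liminf_n\|x_n-x\|<\liminf_n\|x_n-y\|$ for $y\neq x$. Your self-contained variant via the expansion
\begin{equation*}
\|x_n-\mathcal{S}x\|^2=\|x_n-x\|^2+2\langle x_n-x,\,x-\mathcal{S}x\rangle+\|x-\mathcal{S}x\|^2
\end{equation*}
closes that gap cleanly: weak convergence kills the cross term, boundedness of a weakly convergent sequence makes the $\liminf$ arithmetic legitimate, and since $t\mapsto t^2$ is increasing on $[0,\infty)$ the squared inequality transfers back to the norms, yielding $\liminf_n\|x_n-\mathcal{S}x\|^2=\liminf_n\|x_n-x\|^2+\|x-\mathcal{S}x\|^2$, which contradicts the triangle-inequality bound $\liminf_n\|x_n-\mathcal{S}x\|\leq\liminf_n\|x_n-x\|$ unless $\mathcal{S}x=x$. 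This is exactly the classical proof, and it is complete as written.
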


\section{Main results}

Now, we are ready to give our main results.

\begin{theorem}\label{x} Let $\mathcal{H}$ be a real Hilbert space and $\mathcal{S}:\mathcal{H}\rightarrow \mathcal{H}$, $\mathfrak{J}^{'}:\mathcal{H}\rightarrow {\mathcal{H}}$ be the nonexapansive mappings such that $x^*\in\Sigma=\mathcal{F}(\mathcal{S})\cap\Omega\neq\phi$. Let $\{x_n\}$ be a sequence generated in the following manner
	\begin{equation}\label{3.1}
	\begin{cases}
	w_n=x_n+\theta_n(x_n-x_{n-1})\\
	y_n=(1-\beta_n)w_n+\beta_n\mathcal{S}w_n\\
	x_{n+1}=(1-\alpha_n)\mathcal{S}y_n+\alpha_n\mathfrak{J}^{'}y_n
	\end{cases}
	\end{equation}
	where, $\mathfrak{J}^{'}=\mathcal{U}$ $(I+\gamma\mathcal{A}^*(\mathcal{V}-I)\mathcal{A}),$\\
	$\mathcal{U}=\mathfrak{J}^{\mathcal{B}_1}_\lambda(I-\lambda g_1)$ and $\mathcal{V}=\mathfrak{J}^{\mathcal{B}_2}_\lambda(I-\lambda g_2)$,
	$ ~~\lambda>0,~~\gamma\in \Big(0,\frac{2}{\|A^*A\|}\Big)$ and $n\geq1$. 
	Also, the sequences $\{\theta_n\}, \{\alpha_n\}$ and $\{\beta_n\}$  satisfy:
\begin{item}
\item (D1) $\sum_{n=1}^{\infty}\theta_n<\infty, \{\theta_n\}\subset[0,\theta]$, $ 0\leq\theta<1$;
\item (D2) $\{\alpha_n\}$, $\{\beta_n\} \subset [\delta,1-\delta]$ for some $\delta\in(0,0.5);$
\item (D3) $\{\mathcal{S}(w_n)-w_n\}$, $\{\mathcal{S}(w_n)-x^*\}$  are bounded;
\item (D4) $\{\mathfrak{J}^{'}(w_n)-w_n\}$, $\{\mathfrak{J}^{'}(w_n)-x^*\}$ are bounded for any $x^*\in \Sigma$.
\end{item}. 
Then  $\{x_n\}$ generated in the above iterative process holds,
\begin{enumerate}
\item $\lim_{n\rightarrow \infty}\|x_n-x^*\|$ exists.\label{1}
\item $ \lim_{n\rightarrow \infty}\|x_n-\mathcal{S}(x_n)\|=0=\lim_{n\rightarrow \infty}\|x_n-\mathfrak{J}^{'}(x_n)\|.$\label{2}
	\end{enumerate}

\end{theorem}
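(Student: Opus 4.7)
The plan is to prove (1) by deriving a Fejér-type inequality $\|x_{n+1}-x^*\| \leq \|x_n-x^*\| + \delta_n$ with $\sum_n \delta_n < \infty$ and then invoking Lemma \ref{l,2.3}, and to prove (2) by combining the squared form of identity (\ref{2.1}) with a telescoping argument on a sum involving $\|w_n-\mathcal{S}w_n\|^2$ and $\|\mathcal{S}y_n-\mathfrak{J}'y_n\|^2$. First, using the nonexpansiveness of $\mathcal{S}$ (so $\|\mathcal{S}w_n - x^*\| \leq \|w_n - x^*\|$) and the corresponding property of $\mathfrak{J}'$ at $x^*\in\Omega$, I would chain three easy estimates: $\|y_n - x^*\| \leq \|w_n - x^*\|$ from the convex combination in the second line, $\|x_{n+1} - x^*\| \leq \|y_n - x^*\|$ from the third line, and $\|w_n - x^*\| \leq \|x_n - x^*\| + \theta_n \|x_n - x_{n-1}\|$ from the inertial step. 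Concatenating yields $\|x_{n+1} - x^*\| \leq \|x_n - x^*\| + \theta_n \|x_n - x_{n-1}\|$.

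The critical point is to turn this into a summable perturbation. I would first show $\{x_n\}$ is bounded: setting $a_n = \|x_n - x^*\|$ and using $\|x_n - x_{n-1}\| \leq a_n + a_{n-1}$, one gets $a_{n+1} \leq (1+\theta_n)a_n + \theta_n a_{n-1}$, so $M_n := \max(a_n, a_{n-1})$ obeys $M_{n+1} \leq (1+2\theta_n) M_n$ and therefore $M_n \leq M_1 \exp\!\bigl(2\sum_k \theta_k\bigr) < \infty$ by (D1). Hence $\|x_n - x_{n-1}\|$ is uniformly bounded, $\sum_n \theta_n\|x_n - x_{n-1}\| < \infty$, and Lemma \ref{l,2.3} (applied with $\alpha_n\equiv 0$, $\delta_n=\theta_n\|x_n-x_{n-1}\|$) gives existence of $\lim_n \|x_n - x^*\|$. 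For (2), I would apply (\ref{2.1}) twice: $\|y_n - x^*\|^2 \leq \|w_n - x^*\|^2 - \beta_n(1-\beta_n)\|\mathcal{S}w_n - w_n\|^2$ and $\|x_{n+1} - x^*\|^2 \leq \|y_n - x^*\|^2 - \alpha_n(1-\alpha_n)\|\mathfrak{J}'y_n - \mathcal{S}y_n\|^2$, while expansion of $\|w_n - x^*\|^2$ together with the boundedness above yields $\|w_n - x^*\|^2 \leq \|x_n - x^*\|^2 + C\theta_n$ for some constant $C$. Combining these and using (D2), which forces $\alpha_n(1-\alpha_n), \beta_n(1-\beta_n) \geq \delta(1-\delta)>0$, and summing over $n$ with (D1) gives $\sum_n \|\mathcal{S}w_n - w_n\|^2<\infty$ and $\sum_n \|\mathfrak{J}'y_n - \mathcal{S}y_n\|^2<\infty$, hence both tend to $0$.

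Finally, because $\theta_n\to 0$ (from $\sum \theta_n<\infty$) and $\|x_n-x_{n-1}\|$ is bounded, $\|w_n - x_n\| = \theta_n\|x_n - x_{n-1}\| \to 0$, and from $\|y_n - w_n\| = \beta_n \|\mathcal{S}w_n - w_n\|\to 0$ we also get $\|y_n - x_n\|\to 0$. Triangle inequalities of the form $\|x_n - \mathcal{S}x_n\| \leq 2\|y_n - x_n\| + \|\mathcal{S}y_n - y_n\|$ and $\|x_n - \mathfrak{J}'x_n\| \leq 2\|x_n - y_n\| + \|\mathfrak{J}'y_n - \mathcal{S}y_n\| + \|\mathcal{S}x_n - x_n\|$ then transfer the asymptotic regularity from $(w_n, y_n)$ to $x_n$. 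The main obstacle is the boundedness step, since the proof does not assume the usual $\sum_n \theta_n\|x_n - x_{n-1}\|<\infty$ but only the stronger (D1); one has to bootstrap boundedness of $\{x_n\}$ from the very recursion that we want to control, via the small Gronwall-type loop above. Everything afterward is bookkeeping once asymptotic regularity of $\{w_n\}$ and $\{y_n\}$ is in place.
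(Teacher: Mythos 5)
Your proposal is correct, but it departs from the paper's argument at the two places where the real work happens, and in both cases your route is more elementary and self-contained. For boundedness, the paper does not run your Gronwall/maximum-sequence loop at all: it invokes hypotheses (D3)--(D4) directly, writing $\|w_n-x^*\|\leq\|\mathcal{S}(w_n)-w_n\|+\|\mathcal{S}(w_n)-x^*\|\leq K$, so boundedness of $\{w_n\}$ (hence of $\{x_n\}$ via $\|x_{n+1}-x^*\|\leq\|w_n-x^*\|$) is essentially assumed rather than derived. Your bootstrap $M_{n+1}\leq(1+2\theta_n)M_n$ with $M_n\leq M_1\exp(2\sum_k\theta_k)$ is valid and shows (D3)--(D4) are not actually needed for this step, which is a genuine improvement; the paper then feeds the resulting bound $\|x_n-x_{n-1}\|\leq 2K$ into the recursion $\psi_{n+1}\leq\psi_n+\theta_n(\psi_n-\psi_{n-1})+\delta_n$ for $\psi_n=\|x_n-x^*\|^2$ and uses Lemma \ref{l,2.3} with nonzero $\alpha_n=\theta_n$, whereas you use the simpler unsquared inequality with $\alpha_n\equiv 0$; both work. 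For part (2), the paper takes the qualitative route: it splits into the cases $c=0$ and $c>0$, proves $\lim_n\|w_n-x^*\|=\lim_n\|y_n-x^*\|=c$ by a $\limsup/\liminf$ sandwich, and then applies the Schu-type Lemma \ref{n} twice to the convex combinations defining $y_n$ and $x_{n+1}$ to extract $\|\mathcal{S}(w_n)-w_n\|\to 0$ and $\|\mathcal{S}(y_n)-\mathfrak{J}^{'}(y_n)\|\to 0$. You instead keep the cross terms $-\beta_n(1-\beta_n)\|\mathcal{S}w_n-w_n\|^2$ and $-\alpha_n(1-\alpha_n)\|\mathcal{S}y_n-\mathfrak{J}^{'}y_n\|^2$ from identity (\ref{2.1}), telescope, and conclude summability of both quantities from (D1)--(D2). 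This buys a quantitative rate ($\ell^2$-summability rather than mere convergence to zero), avoids the case distinction and the external limit lemma (which, as stated in the paper, has garbled $\liminf/\limsup$ hypotheses anyway), and is the cleaner argument. The concluding triangle-inequality transfer from $(w_n,y_n)$ to $x_n$ is the same in both proofs; just make explicit the intermediate step $\|\mathcal{S}y_n-y_n\|\leq 2\|y_n-w_n\|+\|\mathcal{S}w_n-w_n\|\to 0$ that your first displayed triangle inequality relies on.
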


\begin{proof} Using triangle inequality, we have
		\begin{align}
		\|y_n-x^*\|&=\|(1-\beta_n)w_n+\beta_n\mathcal{S}(w_n)-x^*\|\nonumber\\
		&\leq (1-\beta_n)\|w_n-x^*\|+\beta_n\|\mathcal{S}(w_n)-x^*\|\nonumber\\
		&\leq (1-\beta_n)\|w_n-x^*\|+\beta_n\|w_n-x^*\|\nonumber\\
		&= \|w_n-x^*\|.\label{3.2}
		\end{align}
		So,
		\begin{align}
		\|x_{n+1}-x^*\|&=\|(1-\alpha_n)\mathcal{S}(y_n)+\alpha_n\mathfrak{J}^{'}(y_n)-x^*\|\nonumber\\
		&\leq(1-\alpha_n)\|\mathcal{S}(y_n)+x^*\|+\alpha_n\|\mathfrak{J}^{'}(y_n)-x^*\|\label{3.3}
		\end{align}
		Using the nonexpansiveness of $\mathcal{S},\mathfrak{J}^{'}$ and (\ref{3.2}), we have
		\begin{align}
		\|x_{n+1}-x^*\|&\leq (1-\alpha_n)\|\mathcal{S}(y_n)-x^*\|+\alpha_n\|\mathfrak{J}^{'}(y_n)-x^*\|\nonumber\\
		&\leq(1-\alpha_n)\|\mathcal{S}(y_n)-\mathcal{S}(x^*)\|+\alpha_n\|\mathfrak{J}^{'}(y_n)-\mathfrak{J}^{'}(x^*)\|\nonumber\\
		&\leq (1-\alpha_n)\|y_n-x^*\|+\alpha_n\|y_n-x^*\|\nonumber\\
		&\leq\|y_n-x^*\|.\label{3.4}
		\end{align}
		From (\ref{3.2}) and (\ref{3.4}), we have
		\begin{align}
		\|x_{n+1}-x^*\|&\leq \|y_n-x^*\|\leq\|w_n-x^*\|\nonumber\\
		\|x_{n+1}-x^*\|&\leq\|w_n-x^*\|.\label{3.5}
		\end{align}
		Now, using triangle inequality and conditions (D3) and (D4), we have
		\begin{align}
		\|w_n-x^*\|&=\|w_n-\mathcal{S}(w_n)+\mathcal{S}(w_n)-x^*\|\nonumber\\
		&\leq\|\mathcal{S}(w_n)-w_n\|+\|\mathcal{S}(w_n)-x^*\|\nonumber\\
		&\leq K_1 + K_2=K\nonumber
		\end{align}
		Finally, we conclude that
		\begin{align}
		\|w_n-x^*\|&\leq K
		\end{align}
		For some K$\in [0,\infty)$. That is $\{w_n\}$ is bounded. Hence by (\ref{3.5}), $\{x_n\}$ is bounded.\\
		By using the identity in equation (\ref{2.1}),
		\begin{align}
		\|w_n-x^*\|^2&=\|(1+\theta_n)(x_n-x^*)-\theta_n(x_{n-1}-x^*)\|^2\nonumber\\
		&=(1+\theta_n)\|x_n-x^*\|^2-\theta_n\|x_{n-1}-x^*\|^2+\theta_n(1+\theta_n)\|x_n-x_{n-1}\|^2\label{3.7}
		\end{align}
		From inequalities (\ref{3.5}) and (\ref{3.7})
		\begin{align}
		\|x_{n+1}-x^*\|^2&\leq\|w_n-x^*\|^2\nonumber\\
		&=(1+\theta_n)\|x_n-x^*\|^2-\theta_n\|x_{n-1}-x^*\|^2+\theta_n(1+\theta_n)\|x_n-x_{n-1}\|^2.\label{3.8}
		\end{align}
		Denote $\psi_n :=\|x_n-x^*\|^2$. Then (\ref{3.8}) becomes $\psi_{n+1}\leq\psi_n+\theta_n(\psi_n-\psi_{n-1})+\delta_n$, where $\delta_n=\theta_n(1+\theta_n)\|x_n-x_{n-1}\|^2$. Observe that by (D1),
		\begin{align}
		\sum_{n=1}^{\infty}\delta_n&=\sum_{n=1}^{\infty}\theta_n(1+\theta_n)\|x_n-x_{n-1}\|^2\nonumber\\
		&\leq\sum_{n=1}^{\infty}\theta_n(1+\theta_n)(2K)^2<\infty.
		\end{align}
		By lemma \ref{2.2} (2), there exists $\psi^* \in [0,\infty)$ such that $\lim_{n\rightarrow \infty}\psi_n=\psi^*$. This means that $\lim_{n\rightarrow \infty}\|x_n-x^*\|^2$ exists and, therefore,  $\lim_{n\rightarrow \infty}\|x_n-x^*\|$ exists. This completes the proof of (1).\\
		Now, to achieve (2) part of the theorem \ref{3.1}\\
		Set c = $\lim_{n\rightarrow \infty}\|x_n-x^*\|$. By the nonexpansiveness of $\mathcal{S}$ and $\mathfrak{J}^{'}$, we get
		\begin{align}
		\|x_n-\mathcal{S}(x_n)\|&\leq\|x_n-x^*\|+\|\mathcal{S}(x_n)-x^*\|\nonumber\\
		&\leq\|x_n-x^*\|+\|x_n-x^*\|\nonumber\\
		&=2\|x_n-x^*\|\label{3.10}
		\end{align}
		Also,
		\begin{align}
		\|x_n-\mathfrak{J}^{'}(x_n)\|&\leq\|x_n-x^*\|+\|\mathfrak{J}^{'}(x_n)-x^*\|\nonumber\\
		&\leq\|x_n-x^*\|+\|x_n-x^*\|\nonumber\\
		&=2\|x_n-x^*\|\label{3.11}
		\end{align}
		So, if $c=0$, then $\|x_n-\mathcal{S}(x_n)\|\rightarrow 0$, $\|x_n-\mathfrak{J}^{'}(x_n)\|\rightarrow 0$. Now assume that $c>0$. Note that $\sum_{n=1}^{\infty}\theta_n<\infty $ implies $\lim_{n\rightarrow \infty}\theta_n=0$. It follows from (\ref{3.7})
		\begin{align}
		\lim_{n\rightarrow \infty}\|w_n-x^*\|^2&=\lim_{n\rightarrow \infty}\|(1+\theta_n)(x_n-x^*)-\theta_n(x_{n-1}-x^*)\|^2\nonumber\\
		&=\lim_{n\rightarrow \infty}((1+\theta_n)\|x_n-x^*\|^2-\theta_n\|x_{n-1}-x^*\|^2+\theta_n(1+\theta_n)\|x_n-x_{n-1}\|^2)\nonumber\\
		&=\lim_{n\rightarrow \infty}\|x_n-x^*\|^2=c^2.
		\end{align}
		That is, $\lim_{n\rightarrow \infty}\|w_n-x^*\|=c$. So, this forces\\
		$\limsup_{n\rightarrow \infty}\|y_n-x^*\|\leq\limsup_{n\rightarrow \infty}\|w_n-x^*\|=c$. Next we will claim that $\liminf_{n\rightarrow \infty}\|y_n-x^*\|\geq c$. Since $\mathcal{S}$ and $\mathfrak{J}^{'}$ are nonexpansive, by (\ref{2.1}) we have
		\begin{align}
		\|x_{n+1}-x^*\|^2&=\|(1-\alpha_n)\mathcal{S}(y_n)+\alpha_n\mathfrak{J}^{'}(y_n)-x^*\|^2\nonumber\\
		&=(1-\alpha_n)\|\mathcal{S}(y_n)-x^*\|^2+\alpha_n\|\mathfrak{J}^{'}(y_n)-x^*\|^2-\alpha_n(1-\alpha_n)\|\mathcal{S}(y_n)-\mathfrak{J}^{'}(y_n)\|^2\nonumber\\
		&\leq(1-\alpha_n)\|\mathcal{S}(y_n)-x^*\|^2+\alpha_n\|\mathfrak{J}^{'}(y_n)-x^*\|^2\nonumber\\
		&\leq(1-\alpha_n)\|\mathcal{S}(y_n)-\mathcal{S}(x)^*\|^2+\alpha_n\|\mathfrak{J}^{'}(y_n)-\mathfrak{J}^{'}(x^*)\|^2\nonumber\\
		&\leq(1-\alpha_n)\|y_n-x^*\|^2+\alpha_n\|y_n-x^*\|^2\nonumber\\
		&\leq\|y_n-x^*\|^2\nonumber\\
		\|x_{n+1}-x^*\|^2&\leq\|y_n-x^*\|^2
		\end{align}
		Now, consider
		\begin{align}
		\|y_n-x^*\|^2&=\|(1-\beta_n)w_n+\beta_n\mathcal{S}(w_n)-x^*\|^2\nonumber\\
		&=(1-\beta_n)\|w_n-x^*\|^2+\beta_n\|\mathcal{S}(w_n)-x^*\|^2-\beta_n(1-\beta_n)\|w_n-\mathcal{S}(w_n)\|\nonumber\\
		&\leq(1-\beta_n)\|w_n-x^*\|^2+\beta_n\|\mathcal{S}(w_n)-x^*\|^2\nonumber\\
		&\leq(1-\beta_n)\|w_n-x^*\|^2+\beta_n\|\mathcal{S}(w_n)-\mathcal{S}(x^*)\|^2\nonumber\\
		&\leq\|w_n-x^*\|^2
		\end{align}
		Which yields $\liminf_{n\rightarrow \infty}\|y_n-x^*\|^2\geq c^2$ and so $\liminf_{n\rightarrow \infty}\|y_n-x^*\|\geq c$\\
		Since
		\begin{align}
		c\leq\liminf_{n\rightarrow \infty}\|y_n-x^*\|\leq\limsup_{n\rightarrow \infty}\|y_n-x^*\|\leq c,\nonumber
		\end{align}
		It follows that $\lim_{n\rightarrow \infty}\|y_n-x^*\|=c$.\\
		Again, using the nonexpansiveness of $\mathcal{S}$ and lemma \ref{n}, we have
		\begin{align}
		\|\mathcal{S}(w_n)-\mathcal{S}(x^*)\|&\leq\|w_n-x^*\|,\nonumber\\
		\limsup_{n\rightarrow\infty}\|\mathcal{S}(w_n)-\mathcal{S}(x^*)\|&\leq\limsup_{n\rightarrow\infty}\|w_n-x^*\|,\nonumber\\
		\limsup_{n\rightarrow\infty}\|\mathcal{S}(w_n)-x^*\|&\leq\limsup_{n\rightarrow\infty}\|w_n-x^*\|\leq c,\nonumber\\
		\lim_{n\rightarrow \infty}\|(1-\beta_n)(w_n-x^*)+\beta_n(\mathcal{S}(w_n)-x^*)\|&=\lim_{n\rightarrow \infty}\|y_n-x^*\|=c,\label{3.17}
		\end{align} 
		Also, using the nonexpansiveness of $\mathfrak{J}^{'}$ and lemma (\ref{n}), we have
		\begin{align}
		\|\mathfrak{J}^{'}(y_n)-\mathfrak{J}^{'}(x^*)\|&\leq\|y_n-x^*\|,\nonumber\\
		\limsup_{n\rightarrow\infty}\|\mathfrak{J}^{'}(y_n)-\mathfrak{J}^{'}(x^*)\|&\leq\limsup_{n\rightarrow\infty}\|y_n-x^*\|,\nonumber\\
		\limsup_{n\rightarrow\infty}\|\mathfrak{J}^{'}(y_n)-x^*\|&\leq\limsup_{n\rightarrow\infty}\|y_n-x^*\|\leq c,\nonumber\\
		\lim_{n\rightarrow \infty}\|(1-\alpha_n)(\mathcal{S}y_n-x^*)+\alpha_n(\mathfrak{J}^{'}(y_n)-x^*)\|&=\lim_{n\rightarrow \infty}\|x_{n+1}-x^*\|=c.\label{3.18}
		\end{align}
		Applying lemma \ref{n} in Eq $(\ref{3.17})$ and $(\ref{3.18})$, we get
		\begin{align}
		\lim_{n\rightarrow \infty}\|\mathcal{S}(w_n)-w_n\|=0
		\end{align}
		and
		\begin{align}
		\lim_{n\rightarrow \infty}\|\mathcal{S}(y_n)-\mathfrak{J}^{'}(y_n)\|=0\label{20}
		\end{align}
		However, we know that $y_n-w_n=\beta_n(\mathcal{S}(w_n)-w_n)$ and $w_n-x_n=\theta_n(x_n-x_{n-1})$\\
		which yeild
		\begin{align}
		0&\leq\lim_{n\rightarrow \infty}\|y_n-w_n\|\nonumber\\
		&=\lim_{n\rightarrow \infty}\beta_n\|\mathcal{S}(w_n)-w_n\|\nonumber\\
		&\leq\lim_{n\rightarrow \infty}\|\mathcal{S}(w_n)-w_n\|=0.\label{21}
		\end{align}
		and
		\begin{align}
		\lim_{n\rightarrow \infty}\|w_n-x_n\|&=\lim_{n\rightarrow \infty}\theta_n\|x_n-x_{n-1}\|=0.\label{22}
		\end{align}
		\rm Note that by (D3) and $\theta_n \rightarrow 0$ we have
		\begin{align}
		\lim_{n\rightarrow \infty}\|\mathcal{S}(w_n)-x_n\|&\leq\lim_{n\rightarrow \infty}\|\mathcal{S}(w_n)-w_n\|+\lim_{n\rightarrow \infty}\theta_n\|x_n-x_{n-1}\|=0.\label{23}
		\end{align}
		It follows that, by (\ref{20}), (\ref{21}), (\ref{22}), (\ref{23}) and the nonexpansiveness of $\mathcal{S}$ and $\mathfrak{J}^{'}$ we have
		\begin{align}
		0&\leq\lim_{n\rightarrow \infty}\|\mathcal{S}(x_n)-x_n\|\nonumber\\
		&\leq\lim_{n\rightarrow \infty}\|\mathcal{S}(x_n)-\mathcal{S}(w_n)\|+\lim_{n\rightarrow \infty}\|\mathcal{S}(w_n)-x_n\|\nonumber\\
		&\leq\lim_{n\rightarrow \infty}\|x_n-w_n\|+\lim_{n\rightarrow \infty}\|\mathcal{S}(w_n)-x_n\|=0.
		\end{align}
		and
		\begin{align}
		0&\leq\lim_{n\rightarrow \infty}\|\mathfrak{J}^{'}(x_n)-x_n\|\nonumber\\
		&\leq\lim_{n\rightarrow \infty}\|\mathfrak{J}^{'}(x_n)-\mathfrak{J}(y_n)\|+\lim_{n\rightarrow \infty}\|\mathfrak{J}^{'}(y_n)-\mathcal{S}(w_n)\|+\lim_{n\rightarrow \infty}\|\mathcal{S}(w_n)-x_n\|\nonumber\\
		&\leq\lim_{n\rightarrow \infty}\|x_n-y_n\|+\lim_{n\rightarrow \infty}\|\mathfrak{J}^{'}(y_n)-\mathcal{S}(w_n)\|+\lim_{n\rightarrow \infty}\|\mathcal{S}(w_n)-x_n\|\nonumber\\
		&\leq\lim_{n\rightarrow \infty}\|x_n-w_n\|+\lim_{n\rightarrow \infty}\|w_n-y_n\|=0.
		\end{align}
		Therefore, $\lim_{n\rightarrow \infty}\|\mathcal{S}(x_n)-x_n\|=0=\lim_{n\rightarrow \infty}\|\mathfrak{J}^{'}(x_n)-x_n\|$ as desired.
\end{proof}
\begin{theorem}
Let	$\mathcal{H}$ be a real Hilbert space having Opial's property. Suppose that $\mathcal{S}:\mathcal{H}\rightarrow\mathcal{H}$ and $\mathfrak{J}^{'}:\mathcal{H} \rightarrow {\mathcal{H}}$ are two nonexpansive mappings with $\Sigma=\mathcal{F}(\mathcal{S})\bigcap\Omega\neq\phi$. Then the sequence $\{x_n\}$ in algorithm \ref{3.1} converges weakly to a common fixed point of $\mathcal{S}$ and $\mathfrak{J}^{'}$.
\end{theorem}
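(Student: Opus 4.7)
The plan is to combine the conclusions of Theorem \ref{x} with a standard Opial-property uniqueness argument. From Theorem \ref{x}, for every $x^*\in\Sigma$ the limit $\lim_{n\to\infty}\|x_n-x^*\|$ exists, so in particular $\{x_n\}$ is bounded, and additionally $\|x_n-\mathcal{S}(x_n)\|\to 0$ and $\|x_n-\mathfrak{J}^{'}(x_n)\|\to 0$. Together with reflexivity of $\mathcal{H}$, these two facts will do most of the work.

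First I would extract a weakly convergent subsequence $x_{n_j}\rightharpoonup z$, which is possible because $\{x_n\}$ is bounded. Since $\mathcal{S}$ is nonexpansive and $x_{n_j}-\mathcal{S}(x_{n_j})\to 0$, Lemma \ref{2.5} yields $z\in\mathcal{F}(\mathcal{S})$; applying the same lemma to $\mathfrak{J}^{'}$ gives $z\in\mathcal{F}(\mathfrak{J}^{'})$. To conclude $z\in\Sigma$ I must identify $\mathcal{F}(\mathfrak{J}^{'})$ with the solution set $\Omega$ of (SMIP). Writing $\mathfrak{J}^{'}=\mathcal{U}(I+\gamma\mathcal{A}^*(\mathcal{V}-I)\mathcal{A})$, I would show that $\mathfrak{J}^{'}(z)=z$ is equivalent to the pair of identities $\mathcal{U}z=z$ and $\mathcal{V}\mathcal{A}z=\mathcal{A}z$, which by the fixed-point characterization of the resolvents of $\mathcal{B}_1$ and $\mathcal{B}_2$ translate into $0\in g_1(z)+\mathcal{B}_1(z)$ and $0\in g_2(\mathcal{A}z)+\mathcal{B}_2(\mathcal{A}z)$, that is, $z\in\Omega$.

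Finally, to upgrade ``every weak cluster point lies in $\Sigma$'' to ``the whole sequence converges weakly'', I would invoke Lemma \ref{2.2}: if $x_{n_j}\rightharpoonup z$ and $x_{n_k}\rightharpoonup z'$ are two weak cluster points, both lie in $\Sigma$ by the preceding step, and Theorem \ref{x}(1) guarantees that $\lim_n\|x_n-z\|$ and $\lim_n\|x_n-z'\|$ both exist. Opial's property then forces $z=z'$, so $\{x_n\}$ has a unique weak cluster point in $\Sigma$ and hence converges weakly to it.

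I expect the main obstacle to be the identification $\mathcal{F}(\mathfrak{J}^{'})=\Omega$, since the composed operator $\mathfrak{J}^{'}$ interleaves a resolvent on $\mathcal{H}_1$, a forward-backward step on $\mathcal{H}_2$, and a pullback through $\mathcal{A}^*$. One must carefully exploit the firm nonexpansiveness of $\mathcal{V}$ together with the step-size condition $\gamma\in(0,2/\|\mathcal{A}^*\mathcal{A}\|)$ — which makes $I+\gamma\mathcal{A}^*(\mathcal{V}-I)\mathcal{A}$ averaged — to conclude that at a fixed point of $\mathfrak{J}^{'}$ the correction term $\mathcal{A}^*(\mathcal{V}-I)\mathcal{A}z$ vanishes, so that the two inclusions in (SMIP) hold separately. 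Once this is in place, the remainder is a routine Opial-type argument.
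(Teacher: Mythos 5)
Your proposal matches the paper's proof essentially step for step: boundedness and the vanishing of $\|x_n-\mathcal{S}(x_n)\|$ and $\|x_n-\mathfrak{J}^{'}(x_n)\|$ come from Theorem \ref{x}, demiclosedness (Lemma \ref{2.5}) places every weak cluster point in $\mathcal{F}(\mathcal{S})\cap\mathcal{F}(\mathfrak{J}^{'})$, and the Opial argument (Lemma \ref{2.2}) combined with the existence of $\lim_n\|x_n-q\|$ for cluster points $q$ forces the weak limit to be unique. The one place you go beyond the paper is the identification $\mathcal{F}(\mathfrak{J}^{'})=\Omega$: the paper simply writes $q_1,q_2\in\Sigma$ the moment it has $\mathfrak{J}^{'}(q_i)=q_i$, whereas you correctly flag that this equivalence requires a separate argument (Moudafi's characterization, using the averagedness coming from $\gamma\in(0,2/\|\mathcal{A}^*\mathcal{A}\|)$ and the nonemptiness of $\Omega$), so your version is, if anything, the more complete one.
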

\begin{proof}
Let $x^* \in \Sigma$. By Theorem (\ref{x})(\ref{1}), $\lim_{n\rightarrow \infty}\|x_n-x^*\|$ exists. Hence $\{x_n\}$ is bounded. Let $\{x_{n_k}\}$ and
$\{x_{n_j}\}$ be subsequences of the sequence of $\{x_n\}$ with the two weak limits $q_1$ and $q_2$, respectively. By theorem \ref{3.1} (2), $\lim_{n\rightarrow \infty}\|x_{n_k}-\mathcal{S}(x_{n_k})\|=0$, $\lim_{n\rightarrow \infty}\|x_{n_k}-\mathfrak{J}^{'}(x_{n_k})\|=0$, $\lim_{n\rightarrow \infty}\|x_{n_j}-\mathcal{S}(x_{n_j})\|=0$ and $\lim_{n\rightarrow \infty}\|x_{n_j}-\mathfrak{J}^{'}(x_{n_j})\|=0$. By Lemma (\ref{2.5}), $\mathcal{S}(q_i)=q_i,  \mathfrak{J}^{'}(q_i)=q_i$ for $i=1,2$. That is, $q_1,q_2\in\Sigma$. Applying theorem(\ref{3.1})(\ref{1}) again, we have $\lim_{n\rightarrow \infty}\|x_n-q_1\|$ and $\lim_{n\rightarrow \infty}\|x_n-q_2\|$ exist and both $\{x_{n_k}\}$ and $\{x_{n_j}\}$ are sequences converging to $q_1$ and $q_2$, respectively. By lemma (\ref{2.2}), $q_1= q_2$.\\ Therefore, $\{x_n\}$ converges weakly to a common fixed point in $\Sigma$.
\end{proof}

\begin{theorem}\label{th.3.3}
let $\mathcal{H}$ is a uniformly convex Hilbert space, Suppose that $\mathcal{S}:\mathcal{H}\rightarrow \mathcal{H}$ and $\mathfrak{J}^{'}:\mathcal{H}\rightarrow \mathcal{H}$ are two nonexapansive mappings with $\Sigma=\mathcal{F}(\mathcal{S})\bigcap\Omega\neq\phi$ and satisfy the {\bf condition (B)}. Then the sequence $\{x_{n}\}$ in (\ref{3.1}) converges strongly to a common fixed point of $\mathcal{S}$ and $\mathfrak{J}^{'}$.
\end{theorem}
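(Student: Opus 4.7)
The plan is to combine condition (B) with the quasi-Fej\'er estimate from the proof of Theorem \ref{x} to upgrade the asymptotic regularity already established there to norm convergence; uniform convexity is automatic in any Hilbert space and will not play an independent role. First, fix $x^* \in \Sigma$ and apply Theorem \ref{x}: $\lim_n \|x_n - x^*\|$ exists, and both $\|x_n - \mathcal{S}(x_n)\|$ and $\|x_n - \mathfrak{J}'(x_n)\|$ tend to $0$. Condition (B) then gives
$$f(d(x_n, \Sigma)) \leq \max\{\|x_n - \mathcal{S}(x_n)\|,\ \|x_n - \mathfrak{J}'(x_n)\|\} \longrightarrow 0,$$
and since $f$ is nondecreasing with $f(r) > 0$ for $r > 0$, this forces $d(x_n, \Sigma) \to 0$. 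Moreover, $\Sigma$ is closed because $\mathcal{F}(\mathcal{S})$ is closed (nonexpansive $\mathcal{S}$) and $\Omega = \mathcal{F}(\mathfrak{J}')$ is the standard characterization of the split-inclusion operator.

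Second, to show $\{x_n\}$ is Cauchy I would, for each $N$, choose $c_N \in \Sigma$ with $\|x_N - c_N\| \leq d(x_N, \Sigma) + 1/N$, so that $\|x_N - c_N\| \to 0$. The proof of Theorem \ref{x} produces the $c$-independent inequality $\psi_{n+1}^c \leq \psi_n^c + \theta_n(\psi_n^c - \psi_{n-1}^c) + \delta_n$, where $\psi_n^c := \|x_n - c\|^2$ and $\delta_n := \theta_n(1+\theta_n)\|x_n - x_{n-1}\|^2$, with $\sum_n \delta_n < \infty$. Hence $a_n^c := [\psi_{n+1}^c - \psi_n^c]_+$ satisfies $a_n^c \leq \theta\, a_{n-1}^c + \delta_n$ by (D1), and a geometric summation yields
$$\sum_{n \geq N} a_n^{c_N} \leq \frac{\theta\, a_{N-1}^{c_N} + \sum_{n \geq N}\delta_n}{1-\theta}.$$
Since $a_{N-1}^{c_N} \leq \|x_N - c_N\|^2 \to 0$ and $\sum_{n \geq N}\delta_n \to 0$, the right-hand side vanishes with $N$. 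Telescoping then gives $\|x_n - c_N\|^2 \leq \|x_N - c_N\|^2 + \sum_{k \geq N} a_k^{c_N}$ for every $n \geq N$, so the triangle inequality $\|x_n - x_m\| \leq \|x_n - c_N\| + \|x_m - c_N\|$ makes $\{x_n\}$ Cauchy, and I set $x^\dagger := \lim_n x_n$.

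Third, by continuity of $\mathcal{S}$ and $\mathfrak{J}'$ together with Theorem \ref{x}(2), $\mathcal{S}(x^\dagger) = x^\dagger$ and $\mathfrak{J}'(x^\dagger) = x^\dagger$, which places $x^\dagger \in \mathcal{F}(\mathcal{S}) \cap \Omega = \Sigma$. The main obstacle is the Cauchy step: ordinary Fej\'er monotonicity is not available here because the inertial term $\theta_n(x_n - x_{n-1})$ can increase $\|x_n - c\|$, so a small value of $\|x_N - c_N\|$ does not automatically propagate forward. The rescue is precisely the $c$-independent summability of $\{\delta_n\}$ (stronger than the qualitative conclusion of Lemma \ref{l,2.3}(1) alone): it lets the positive-variation tail be controlled by $\|x_N - c_N\|^2$ plus a quantity that vanishes with $N$, once $c_N$ is coupled to $x_N$.
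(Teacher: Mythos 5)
Your proof is correct, and it reaches the conclusion by a genuinely different (and in fact tighter) route in the second half. The first half coincides with the paper: both you and the authors combine condition (B) with Theorem \ref{x}(2) to force $f(d(x_n,\Sigma))\to 0$ and hence $d(x_n,\Sigma)\to 0$, after the quasi-Fej\'er recursion $\Psi_{n+1}\le\Psi_n+\theta_n(\Psi_n-\Psi_{n-1})+\delta_n$ and Lemma \ref{l,2.3} guarantee that $\lim_n d(x_n,\Sigma)$ exists. Where you diverge is the Cauchy step. The paper does not show $\{x_n\}$ is Cauchy directly; it extracts a subsequence $\{x_{n_j}\}$ with companions $x_j^*\in\Sigma$ satisfying $\|x_{n_j}-x_j^*\|<2^{-j}$, argues that $\{x_j^*\}$ is Cauchy, obtains its limit $q\in\Sigma$ by closedness of $\Sigma$, and then upgrades subsequential convergence of $\{x_{n_j}\}$ to $q$ into convergence of the whole sequence via the existence of $\lim_n\|x_n-q\|$ from Theorem \ref{x}(1). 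That route is shorter on the page, but its key estimate bounds $\|x_m-x_n\|$ by $\inf_{y}\|x_m-y\|+\inf_{y}\|x_n-y\|$ after passing the infimum inside a sum, and the inequality $\inf_y(A(y)+B(y))\le\inf_y A(y)+\inf_y B(y)$ goes the wrong way; what is really needed there is exactly the forward propagation of smallness of $\|x_N-c_N\|$ that plain quasi-Fej\'er monotonicity with an inertial term does not give for free. Your argument supplies precisely that missing control: the recursion holds for every $c\in\Sigma$ with a $c$-independent summable error $\delta_n$, the positive variations satisfy $a_n^c\le\theta a_{n-1}^c+\delta_n$, and the resulting tail bound $\sum_{n\ge N}a_n^{c_N}\le(1-\theta)^{-1}(\theta\|x_N-c_N\|^2+\sum_{n\ge N}\delta_n)\to 0$ makes $\{x_n\}$ itself Cauchy, after which continuity of $\mathcal{S}$ and $\mathfrak{J}^{'}$ plus Theorem \ref{x}(2) identifies the limit as a point of $\Sigma$. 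So your version buys a fully quantitative and self-contained Cauchy estimate that repairs the paper's weakest step, at the cost of a somewhat longer computation; you are also right that uniform convexity is vacuous here, since every Hilbert space is uniformly convex.
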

	\begin{proof}
		Let $y\in\Sigma $. Now by $(\ref{3.8})$, we get
		\begin{align}
		\inf_{y\in\Sigma}\{\|x_{n+1}-y\|^2\}&\leq \inf_{y\in\Sigma}\{\|w_{n}-y\|^2\}\nonumber\\
		&=\inf_{y\in\Sigma}\{(1+\theta_n)\|x_{n}-y\|^2\}+\inf_{y\in\Sigma}\{-\theta_n\|x_{n-1}-y\|^2\}\nonumber\\
		&+\inf_{y\in\Sigma}\{\theta_n(1+\theta_n)\|x_{n}-x_{n-1}\|^2\}\nonumber\\
		&\leq\inf_{y\in\Sigma}\{\|x_{n}-y\|^2\}+\theta_n\inf_{y\in\Sigma}\{\|x_{n}-y\|^2\}+\inf_{y\in\Sigma}\{-\theta_n\|x_{n-1}-y\|^2\}\nonumber\\
		&+\inf_{y\in\Sigma}\{\theta_n(1+\theta_n)\|x_{n}-x_{n-1}\|^2\}\nonumber\\
		&\leq\inf_{y\in\Sigma}\{\|x_{n}-y\|^2\}+\theta_n[\inf_{y\in\Sigma}\{\|x_{n}-y\|^2\}+\inf_{y\in\Sigma}\{-\theta_n\|x_{n-1}-y\|^2\}\nonumber\\
		&+\inf_{y\in\Sigma}]\{\theta_n(1+\theta_n)\|x_{n}-x_{n-1}\|^2\}\label{3.25}
		\end{align}
		Denote $\Psi_n:=\inf_{y\in\Sigma}\{\|x_n-y\|^2\}$.Then (\ref{3.25}) becomes
		\begin{align}
		\Psi_{n+1}\leq\Psi_n+\theta_n(\Psi_n-\Psi_{n-1})+\delta_n,
		\end{align}
		where $\delta_n=\theta_n(1+\theta_n)\|x_n-x_{n-1}\|^2$. Observe that by (D1),
		\begin{align}
		\sum_{n=1}^{\infty}\delta_n&=\sum_{n=1}^{\infty}\theta_n(1+\theta_n)\|x_n-x_{n-1}\|^2\nonumber\\
		&\leq\sum_{n=1}^{\infty}\theta_n(1+\theta_n)(2K)^2<\infty.\label{3.27}
		\end{align}
		By Lemma \ref{l,2.3}(2), there exist $\Psi^*\in[0,\infty)$ such that $\lim_{n\rightarrow \infty}\Psi_n=\Psi^*$.\\
		That is, $\lim_{n\rightarrow \infty}\inf_{y\in\Sigma}\{\|x_n-y\|^2\}$ exists. Therefore, $\lim_{n\rightarrow \infty}\inf_{y\in\Sigma}\{\|x_n-y\|\}$ exists. Since $\mathcal{S}$ and $\mathfrak{J}^{'}$ satisfy {\bf condition B}, by Theorem \ref{x}(\ref{1}) it implies that
		\begin{align*}
		\lim_{n\rightarrow \infty}f\big(inf_{y\in\Sigma}\{\|x_n-y\|\}\big)=0
		\end{align*}
		and, thus,
		\begin{align*}
		\lim_{n\rightarrow \infty}\inf_{y\in\Sigma}\{\|x_n-y\|\}=0
		\end{align*}
		\rm So, we can find a subsequence $\{x_{n_j}\}$ of $\{x_n\}$ and a sequence $\{x^*_j\}\subset\Sigma$ satisfying $\|x_{n_j}-x^*_j\|<\frac{1}{2^j}$. Next we will show that $\{x^*_j\}$ is a cauchy sequence. Let $\epsilon>0$. Since $\lim_{n\rightarrow \infty}\inf_{y\in\Sigma}\{\|x_n-y\|\}=0$, there is $N\in\mathbb{N}$ such that $\inf_{y\in\Sigma}<\frac{\epsilon}{6}$ for all $n\geq N$. For all $m,n\geq N$, we have
		\begin{align*}
		\|x_m-x_n\|\leq\|x_m-y\|+\|x_n-y\|
		\end{align*}
		for all $y\in\Sigma$.Thus,
		\begin{align*}
		\|x_m-x_n\|&\leq\inf_{y\in\Sigma}\{\|x_m-y\|+\|x_n-y\|\}=\inf_{y\in\Sigma}\{\|x_m-y\|\}+\inf_{y\in\Sigma}\{\|x_n-y\|\} <\frac{\epsilon}{6}+\frac{\epsilon}{6}=\frac{\epsilon}{3}
		\end{align*}
		for all $\rm m,n\geq N$. Also, there is $j_0\in \mathbb{N}$ such that $\frac{1}{2^{j_0}}<\frac{\epsilon}{3}$. Choose $\rm M=max\{N,j_0\}$. Then, for all $j>k\geq M$, we have\\
		\begin{align*}
		\|x^*_j-x^*_k\|&\leq\|x^*_j-x_{n_j}\|+\|x_{n_j}-x_{n_k}\|+\|x_{n_k}-x^*_k\|<\frac{\epsilon}{6}+\frac{\epsilon}{6}=\frac{\epsilon}{3}=\epsilon.
		\end{align*}
		Therefore, $\{x^*_j\}$ is a cauchy sequence and so there exist $q\in\mathcal{H}$ such that $x^*_j$ converges to q. Since $\Sigma$ is closed, $q\in \Sigma$. As a result , we see that $x_{n_j}$ converges to q. Since $\lim_{n\rightarrow \infty}\|x_n-q\|$ exist by Theorem \ref{x}(1), the conclusion follows.
	\end{proof}
\begin{theorem}
	Let $\mathcal{H}$ is a uniformly convex Hilbert space, Suppose that $\mathcal{S},\mathcal{J}^{'}:\mathcal{H}\rightarrow\mathcal{H}$ are two nonexpansive mapping with $\Sigma=\mathcal{F}(\mathcal{S})\cap\Omega \neq\phi$ and $\mathcal{S}$ is semicompact. Then the sequence $\{x_n\}$ in (\ref{x})converges strongly to a common fixed point of $\mathcal{S}$ and $\mathcal{J}^{'}$.
	\begin{proof}
From theorem \ref{3.1}, $\{x_n\} $ is bounded and $\liminf_{n\rightarrow \infty}\|x_n-\mathcal{S}(x_n)\|=0=\lim_{n\rightarrow \infty}\|x_n-\mathcal{J}^{'}\|$. By the semicompactness of $\mathcal{S}$, there exists $q\in\mathcal{H}$ and a subsequence $\{x_{n_j}\}$
of $\{x_n\}$ such that $x_{n_j}\rightarrow q$ as $j\rightarrow \infty$. Then
\begin{align}
\|q-\mathcal{S}(q)\|&\leq\|q-x_{n_j}\|+\|x_{n_j}-\mathcal{S}(x_{n_j})\|+\|\mathcal{S}(x_{n_j})-\mathcal{S}(q)\|\nonumber\\
&\leq\|q-x_{n_j}\|+\|x_{n_j}-\mathcal{S}(x_{n_j})\|+\|x_{n_j}-\mathcal{S}(q)\|\nonumber\\
&\rightarrow 0 ~~~ as~~~j\rightarrow \infty. 
\end{align}
Also,
\begin{align}
\|q-\mathcal{J}^{'}(q)\|&\leq\|q-x_{n_j}\|+\|x_{n_j}-\mathcal{J}^{'}(x_{n_j})\|+\|\mathcal{J}^{'}(x_{n_j})-\mathcal{J}^{'}(q)\|\nonumber\\
&\leq\|q-x_{n_j}\|+\|x_{n_j}-\mathcal{J}^{'}(x_{n_j})\|+\|x_{n_j}-\mathcal{J}^{'}(q)\|\nonumber\\
&\rightarrow 0 ~~~ as~~~j\rightarrow \infty. 
\end{align}
Thus $q\in\Sigma$. As in the proof of the theorem \ref{th.3.3}, $\lim_{n\rightarrow \infty}\inf_{y\in\Sigma}\{\|x_n-y\| \}$ exists. We observe that $\inf_{y\in\Sigma}\{\|x_{n_j}-y \|\}\leq\|x_{n_j}-q\|\rightarrow 0$ as $j\rightarrow \infty,$ hence $\liminf_{n\rightarrow \infty}\inf_{y\in\Sigma}\{\|x_n-y\| \}=0$. It follows, as in the proof theorem \ref{th.3.3}, that $\{x_n\}$ converges strongly a common fixed point $\mathcal{S}$ and $\mathcal{J}^{'}$. This completes the proof.
\end{proof}
\end{theorem}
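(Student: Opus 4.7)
The plan is to exploit the two conclusions of Theorem \ref{x} (boundedness of $\{x_n\}$ together with $\|x_n-\mathcal{S}(x_n)\|\to 0$ and $\|x_n-\mathcal{J}'(x_n)\|\to 0$) to extract, via the semicompactness of $\mathcal{S}$, a strongly convergent subsequence, to identify its limit as an element of $\Sigma$, and finally to promote this subsequential convergence to convergence of the full sequence using the monotonicity of the distance from elements of $\Sigma$ provided by Theorem \ref{x}(1).

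First I would invoke Theorem \ref{x} to record that $\{x_n\}$ is bounded in $\mathcal{H}$ and that $\lim_{n\to\infty}\|x_n-\mathcal{S}(x_n)\|=0$. Because $\mathcal{S}$ is semicompact and $\{x_n\}$ is a sequence satisfying $d(x_n,\mathcal{S}(x_n))\to 0$, the definition of semicompactness yields a subsequence $\{x_{n_j}\}$ and a point $q\in\mathcal{H}$ with $x_{n_j}\to q$ as $j\to\infty$ in norm.

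Next I would verify that $q\in\Sigma=\mathcal{F}(\mathcal{S})\cap\Omega$. For the $\mathcal{S}$-part, using the triangle inequality and nonexpansiveness of $\mathcal{S}$, I estimate
\begin{align*}
\|q-\mathcal{S}(q)\|\leq\|q-x_{n_j}\|+\|x_{n_j}-\mathcal{S}(x_{n_j})\|+\|\mathcal{S}(x_{n_j})-\mathcal{S}(q)\|\leq 2\|q-x_{n_j}\|+\|x_{n_j}-\mathcal{S}(x_{n_j})\|,
\end{align*}
and both terms vanish as $j\to\infty$ by the strong convergence $x_{n_j}\to q$ and by Theorem \ref{x}(2). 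The same estimate with $\mathcal{S}$ replaced by $\mathcal{J}'$, together with $\lim_{n\to\infty}\|x_n-\mathcal{J}'(x_n)\|=0$, gives $\mathcal{J}'(q)=q$. Since fixed points of the composite operator $\mathcal{J}'=\mathcal{U}(I+\gamma\mathcal{A}^*(\mathcal{V}-I)\mathcal{A})$ correspond precisely to solutions of the split monotone inclusion problem, this shows $q\in\Omega$, whence $q\in\Sigma$.

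Finally, because $q\in\Sigma$, Theorem \ref{x}(1) guarantees that $\lim_{n\to\infty}\|x_n-q\|$ exists. Since the subsequence $\|x_{n_j}-q\|\to 0$, the limit of the whole sequence $\|x_n-q\|$ must equal $0$; that is, $x_n\to q$ strongly, and $q$ is a common fixed point of $\mathcal{S}$ and $\mathcal{J}'$ lying in $\Sigma$. The main obstacle I foresee is really only a bookkeeping one: one has to be sure that $\mathcal{F}(\mathcal{J}')$ matches $\Omega$ in the sense used to define $\Sigma$, but this is a standard property of the resolvent-based operator $\mathcal{J}'$ appearing in (\ref{3.1}) and is already implicit in the assumption $\Sigma\neq\emptyset$. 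Everything else reduces to routine triangle-inequality arguments and an appeal to the previously established Theorem \ref{x}.
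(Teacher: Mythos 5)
Your proposal is correct and follows the paper's argument almost exactly for the first two thirds: invoke Theorem \ref{x} for boundedness and the asymptotic regularity $\|x_n-\mathcal{S}(x_n)\|\to 0$, $\|x_n-\mathcal{J}'(x_n)\|\to 0$; use semicompactness of $\mathcal{S}$ to extract a norm-convergent subsequence $x_{n_j}\to q$; and run the same triangle-inequality estimates to conclude $\mathcal{S}(q)=q$ and $\mathcal{J}'(q)=q$, hence $q\in\Sigma$. Where you diverge is the final step. The paper takes a detour back through the machinery of Theorem \ref{th.3.3}: it re-establishes that $\lim_{n\to\infty}\inf_{y\in\Sigma}\|x_n-y\|$ exists, observes that $\inf_{y\in\Sigma}\|x_{n_j}-y\|\leq\|x_{n_j}-q\|\to 0$ forces this limit to be zero, and then repeats the Cauchy-subsequence construction from that proof to manufacture a strong limit in $\Sigma$. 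You instead apply Theorem \ref{x}(1) directly at the point $q$ (which is legitimate, since $q\in\Sigma$ and the theorem's conclusion holds for any element of $\Sigma$, exactly as the paper itself exploits in its weak convergence theorem): $\lim_{n\to\infty}\|x_n-q\|$ exists and a subsequence of it tends to $0$, so the whole sequence converges to $q$. Your route is shorter and avoids redundant work — in this setting the strongly convergent subsequence and its limit in $\Sigma$ are already in hand, so the $\inf_{y\in\Sigma}$ apparatus buys nothing; the paper's phrasing mainly serves to reuse text from Theorem \ref{th.3.3}. Your side remark about checking that $\mathcal{F}(\mathcal{J}')$ coincides with $\Omega$ is a genuine point that the paper leaves implicit, and flagging it is appropriate rather than a gap.
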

\section{Numerical illustration}
\textbf{Example 1.} Let $\mathcal{H}_1=\mathcal{H}_2=\mathbb{R}$, the set of all real numbers, with inner product defined by $\langle x,y\rangle=xy$, $\forall x,y \in \mathbb{R}$. Let $\mathcal{U}(x)= \frac{9x-7}{2}$, $\mathcal{V}(x)=\frac{5x-2}{3}$ and $A:\mathbb{R}\rightarrow\mathbb{R}$ be defined by $A(x)=-\frac{x}{2}, \forall x\in\mathbb{R}$. let the mapping $S:C\rightarrow\mathbb{R}$ be defined by $S(x)=\frac{x+3}{4},\forall x\in C$. Setting $\theta_n=0.98$, $\gamma = 0.000025$, $\alpha_n=\frac{1}{n+1}$ and $\beta_n= \frac{n+1}{n+2}$. The sequence $\{x_n\}$ generated by algorithm (\ref{3.1}) reduces to\\
\begin{equation}\label{4.1}
\begin{cases}
w_n=x_n+0.98(x_n-x_{n-1})\\
y_n=\frac{1}{n+2}w_n+\frac{n+1}{n+2}\mathcal{S}w_n\\
x_{n+1}=\frac{n}{n+1}\mathcal{S}y_n+\frac{1}{n+1}\mathfrak{J}^{'}y_n
\end{cases}
\end{equation}
\\
Then, the sequence $\{x_n\}$ converges to a point $x^*=1\in\Sigma$
\newpage
\begin{table}
	%title of table
	\centering
	\begin{tabular}{||p{5cm}|p{5cm}|p{5cm}|p{5cm}|} %centered coloumn
		\hline\hline
		\cellcolor{lime} Number of iterations & $ x_0= -0.1 $ \cellcolor{green} &\cellcolor{orange} $x_0=0.1$ \\ 
		[4ex]
		\hline
		\rowcolor{lightgray}1. &-0.100000                 &  0.100000\\           
		\rowcolor{lightgray}2. &-0.100000                &  0.840001\\
		\rowcolor{lightgray}3. &0.804445                 & 1.092317\\
		\rowcolor{lightgray}4. &1.112832                 & 1.052322\\
		\rowcolor{lightgray}5. &1.063949                 & 1.001938\\
		\rowcolor{lightgray}6. &1.002369                 & 0.993224\\
		\rowcolor{lightgray}7. &0.991718                 & 0.997868\\
		\rowcolor{lightgray}8. &0.997395                 & 1.000330\\
		\rowcolor{lightgray}9. &1.000403                 & 1.000368\\
		\rowcolor{lightgray}10. &1.000449                 & 1.000054\\
		\rowcolor{lightgray}11. &1.000066                 & 0.999967\\
		\rowcolor{lightgray}12. &0.999960                 & 0.999985\\
		\rowcolor{lightgray}13. &0.999982                 & 1.000001\\
		\rowcolor{yellow}14. &1.000000                 & 1.000000\\
		\hline
	\end{tabular}

\label{table:nonline}

\caption
{Numeric result for the above example, shows the convergence of iterative scheme $x_n$ to $x^*=1\in\Sigma$}

\end{table}

\begin{figure}
	\begin{center}
		\includegraphics[width=17cm, height=9cm]{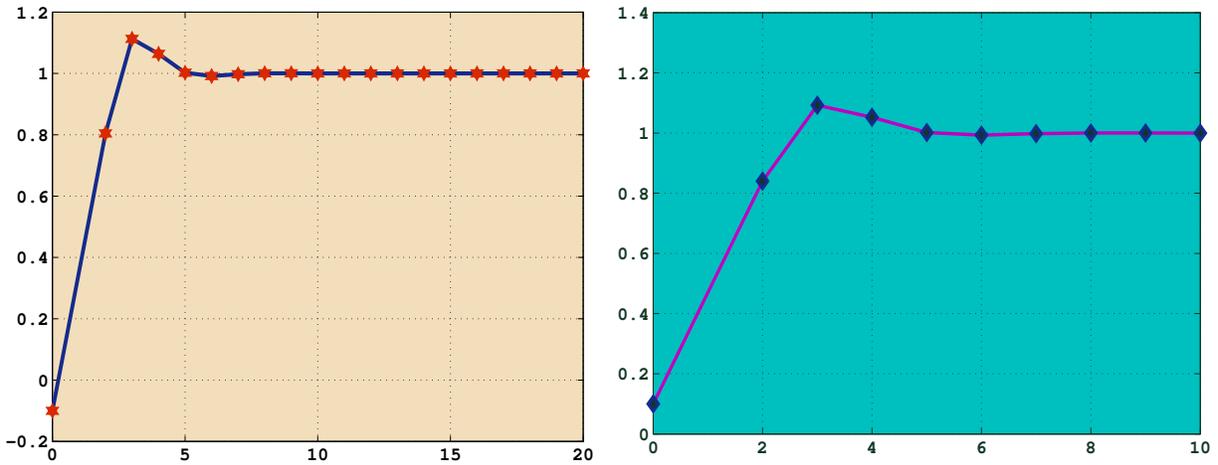}
		\caption{Shows the convergence of sequence $\{x_n\}$ to $x^*=1$ for intial values $x_0=-0.1$ in fig(1) and $x_0=0.1$ in fig(2).}
		\bf \label{fig 1and2:graphe}
	\end{center}
\end{figure}

\vskip 6mm
\newpage
\noindent{\bf Conclusion:}
\noindent   In this paper, we find a common solution of split monotone inclusion problem and fixed point problem by modified S-iteration process combining with inertial step. Weak and strong convergence theorems are established under standard assumptions imposed on cost operators and mappings. Finally, several preliminary,  numerical experiments have also been performed to illustrate the convergence of the proposed algorithms.
\vskip 6mm
\noindent{\bf Disclosure Statement}

\noindent No potential conflict of interest was reported by the authors.

\end{document}